\numberwithin{equation}{section}
\newcommand{\inclu}[0] {\ar@{^{(}->}}
\newcommand{\dist}{{\rm dist}}
\newcommand{\R}{\mathbb{R}}
\newcommand{\EE}{\mathbb{E}}
\newcommand{\RR}{\mathbb{R}}
\newcommand{\cI}{\mathcal{I}}
\newcommand{\cF}{\mathcal{F}}
\newcommand{\proj}{\mathrm{proj}}
\newcommand{\cX}{\mathcal{X}}
\newcommand{\prox}{{\rm prox}}
\newcommand{\argmin}{\operatornamewithlimits{argmin}}
\newtheorem{thm}{Theorem}[section]
\newtheorem{lem}[thm]{Lemma}
\newtheorem{cor}[thm]{Corollary}
\theoremstyle{remark}
\begin{document}
	
	\title{Complexity of finding near-stationary points of convex functions stochastically}
	
	
	\author{Damek Davis\thanks{School of Operations Research and Information Engineering, Cornell University,
			Ithaca, NY 14850, USA;
			\texttt{people.orie.cornell.edu/dsd95/}.}
		\and 
		Dmitriy Drusvyatskiy\thanks{Department of Mathematics, U. Washington, 
			Seattle, WA 98195; \texttt{www.math.washington.edu/{\raise.17ex\hbox{$\scriptstyle\sim$}}ddrusv}. Research of Drusvyatskiy was supported by the AFOSR YIP award FA9550-15-1-0237 and by the NSF DMS   1651851 and CCF 1740551 awards.}
	}

	\date{}
	\maketitle
	\begin{abstract}
	In the recent paper~\cite{weak_conv_papr}, it was shown that the stochastic subgradient method applied to a weakly convex problem, drives the gradient of the Moreau envelope to zero at the rate $O(k^{-1/4})$. In this supplementary note, we present a stochastic subgradient method for minimizing a convex function, with the improved rate $\widetilde O(k^{-1/2})$.
	\end{abstract}
		\section{Introduction}
	Efficiency of algorithms for minimizing smooth convex functions is typically judged by the rate at which the function values decrease along the iterate sequence. A different measure of performance, which has received some attention lately, is the magnitude of the gradient. In the short note \cite{nest_optima}, Nesterov
	 showed  that performing two rounds of a fast-gradient method on a slightly regularized problem  yields an $\varepsilon$-stationary point  
	 in $\widetilde{O}(\varepsilon^{-1/2})$ iterations.\footnote{In this section to simplify notation, we only show dependence on the accuracy $\varepsilon$ and suppress all dependence on the initialization and Lipschitz constants.} This rate is in sharp contrast to the blackbox optimal complexity of $O(\varepsilon^{-2})$ in smooth nonconvex optimization \cite{grad_desc_opt}, trivially achieved by gradient descent. An important consequence is that the prevalent intuition -- smooth convex optimization is easier than its nonconvex counterpart -- attains a very precise mathematical justification. In the recent work \cite{makegradsmall_zhu}, Allen-Zhu investigated the complexity of finding $\varepsilon$-stationary points in the setting when only stochastic estimates of the gradient are available. In this context, Nesterov's strategy paired with a stochastic gradient method (SG) only yields an algorithm with complexity $O(\varepsilon^{-2.5})$. Consequently, the author introduced a new technique based on running SG for logarithmically many rounds, which enjoys the near-optimal efficiency $\widetilde{O}(\varepsilon^{-2})$.
	 
	 In this short technical note, we address a similar line of  questions for nonsmooth convex optimization.  Clearly, there is a caveat: in nonsmooth optimization, it is impossible to find points with small subgradients, within a first-order oracle model. Instead, we focus on the gradients of an implicitly defined  smooth approximation of the function, the Moreau envelope. 
	 
	 Throughout, we consider the optimization problem
	 \begin{equation} \label{eqn:problem_class}
	 \min_{x\in\cX}  ~g(x), 
	 \end{equation} 
	where $\cX \subseteq \RR^d$ is a closed convex set with a computable nearest-point map $\proj_{\cX}$,
	 and $g\colon\R^d\to\R$ a Lipschitz convex function. 	Henceforth, we assume that the only access to $g$ is through a stochastic subgradient oracle; see Section~\ref{sec:conc_guarant} for a precise definition. It will be useful to abstract away the constraint set $\cX$ and define $\varphi\colon\R^d\to\R\cup\{+\infty\}$ to be equal to $g$ on $\cX$	 and $+\infty$ off $\cX$. Thus the target problem \eqref{eqn:problem_class} is equivalent to $\min_{x\in\R^d} \varphi(x)$. In this generality, there are no efficient algorithms within the first-order oracle model that can find $\varepsilon$-stationary points, in the sense of $\dist(0;\partial \varphi(x))\leq \varepsilon$. Instead we focus on finding approximately stationary points of the Moreau envelope:
	 $$\varphi_{\lambda}(x)=\min_{y\in\R^d}~ \{\varphi(y)+\tfrac{1}{2\lambda}\|y-x\|^2\}.$$
	 It is well-known that $\varphi_{\lambda}(\cdot)$ is $C^1$-smooth for any $\lambda>0$, with gradient
	\begin{equation}\label{eqn:grad_form}
	\nabla \varphi_{\lambda}(x)=\lambda^{-1}(x-\prox_{\lambda \varphi}(x)),
	\end{equation}
	 where $\prox_{\lambda \varphi}(x)$ is the proximal point
	 	\begin{equation*}
	 	\prox_{\lambda \varphi}(x):=\argmin_{y\in\R^d}\, \left\{\varphi(y)+\tfrac{1}{2\lambda}\|y-x\|^2\right\}.
	 	\end{equation*}
	When $g$ is smooth, the norm of the gradient $\|\nabla \varphi_{\lambda}(x)\|$ is proportional to the norm of the prox-gradient (e.g. \cite{MR2536820}, \cite[Theorem 3.5]{prox_error}), commonly used in convergence analysis of proximal gradient methods  \cite{nest_conv_comp,Ghadimi2016mini}. In the broader nonsmooth setting, the quantity $\|\nabla \varphi_{\lambda}(x)\|$ nonetheless has an appealing interpretation in terms of near-stationarity for the target problem \eqref{eqn:problem_class}. Namely, the definition of the Moreau envelope directly implies that for any $x\in\R^d$, the proximal point $\hat x:=\prox_{\lambda \varphi}(x)$ satisfies
	\begin{equation*}
	\left\{\begin{array}{cl}
	\|\hat{x}-x\|&=  \lambda\|\nabla \varphi_{\lambda}(x)\|,\\ 
	\varphi(\hat x) &\leq \varphi(x),\\
	\dist(0;\partial \varphi(\hat{x}))&\leq \|\nabla \varphi_{\lambda}(x)\|.
	\end{array}\right. 
	\end{equation*}
	Thus a small gradient $\|\nabla \varphi_{\lambda}(x)\|$ implies that $x$ is {\em near} some point $\hat x$ that is {\em nearly stationary} for \eqref{eqn:problem_class}. The recent paper \cite{weak_conv_papr} notes that following Nesterov's strategy of running two rounds of the projected stochastic subgradient method on a quadratically regularized problem, will find a point $x$ satisfying $\EE\|\nabla \varphi_{\lambda}(x)\|\leq \varepsilon$ after at most $O(\varepsilon^{-2.5})$ iterations. This is in sharp contrast to the complexity $O(\varepsilon^{-4})$ for minimizing functions that are only weakly convex --- the main result of  \cite{weak_conv_papr}. Notice the parallel here to the smooth setting. In this short note, we show that the gradual regularization technique of Allen-Zhu \cite{makegradsmall_zhu}, along with averaging of the iterates, improves the complexity to $\widetilde{O}(\varepsilon^{-2})$ in complete analogy to the smooth setting.

\subsection{Convergence Guarantees}\label{sec:conc_guarant}
Let us first make precise the notion of a stochastic subgradient oracle. To this end, we fix a probability space $(\Omega, \cF, P)$ and equip $\RR^d$ with the Borel $\sigma$-algebra.	
We make the following three standard assumptions: 
\begin{enumerate}
	\item[(A1)]\label{it1} It is possible to generate i.i.d.\ realizations $\xi_1,\xi_2, \ldots \sim dP$.
	\item[(A2)]\label{it2} There is an open set $U$ containing $\cX$ and a measurable mapping $G \colon U \times \Omega \rightarrow \RR^d$ satisfying  $\EE_{\xi}[G(x,\xi)]\in \partial g(x)$ for all $x\in U$.
	\item[(A3)]\label{it3} There is a real $L \geq 0$ such that the inequality, $\EE_\xi\left[ \|G(x, \xi)\|^2\right] \leq L^2$, holds for all $x \in \cX$. 
\end{enumerate}
The three assumption (A1), (A2), (A3) are standard in the literature on stochastic subgradient methods. Indeed, assumptions (A1) and (A2) are identical to assumptions (A1) and (A2) in~\cite{doi:10.1137/070704277}, while Assumption (A3) is the same as the assumption listed in~\cite[Equation~(2.5)]{doi:10.1137/070704277}. 

Henceforth, we fix an arbitrary constant $\rho > 0$ and assume that diameter of $\cX$ is bounded by some real $D>0$. It was shown in \cite[Section~2.1]{davis2018stochastic} that the complexity of finding a point $x$ satisfying $\EE\|\nabla \varphi_{1/\rho}(x)\|\leq \varepsilon$ is at most $O(1)\cdot \frac{(L^2+\varepsilon^2)\sqrt{\rho D}}{\varepsilon^{2.5}}$.  
We will see here that this complexity can be improved to $\widetilde{O}\left(\frac{L^2+\rho^2D^2}{\varepsilon^2}\right)$  by adapting the technique of \cite{makegradsmall_zhu}.


The work horse of the strategy is the subgradient method for minimizing strongly convex functions \cite{Rakhlin_subgrad,MR3353214,hazan_subgrad,subgad_easier}. For the sake of concreteness, we summarize in Algorithm~\ref{alg:subgradient} the stochastic subgradient method taken from \cite{subgad_easier}.
\smallskip

\begin{algorithm}[H]
	\KwData{$x_0 \in \cX$, strong convexity constant $\mu>0$ on $\cX$, maximum iterations $T\in \mathbb{N}$, stochastic subgradient oracle $G$.}
	{\bf Step } $t=0,\ldots,T-2$:\\		
	\begin{equation*}\left\{
	\begin{aligned}
	&\textrm{Sample } \xi_t \sim dP\\
	& \textrm{Set } x_{t+1}=\proj_{\cX}\left(x_{t} - \tfrac{2}{\mu(t+1)}\cdot G(x_t, \xi_t)\right)
	\end{aligned}\right\},
	\end{equation*}
	{\bf Return:} $\bar x=\frac{2}{T(T+1)}\sum_{t=0}^{T-1}(t+1)x_t$.		
	\caption{Projected stochastic subgradient method for strongly convex functions $\textrm{PSSM}^{\rm sc}$($x_0, \mu, G,T$)}
	\label{alg:subgradient}
\end{algorithm}
\smallskip

The following is the basic convergence guarantee of Algorithm~\ref{alg:subgradient}, proved in \cite{subgad_easier}.

\begin{thm}\label{thm:conv_str_subgrad}
The point $\bar x$ returned by Algorithm~\ref{alg:subgradient} satisfies the estimate
$$\EE \left[\varphi(\bar x)-\min \varphi\right]\leq \frac{2L^2}{\mu(T+1)}.$$
\end{thm}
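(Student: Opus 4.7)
The plan is to combine a standard one-step recursion on the squared distance $\|x_{t+1}-x^\star\|^2$ with the weighting scheme used in the definition of $\bar x$ so that the residual quadratic terms telescope, then close the argument with Jensen's inequality applied to the convex combination $\bar x$. Let $x^\star \in \argmin_\cX \varphi$ and let $\mathcal{F}_t$ denote the $\sigma$-algebra generated by $\xi_0,\ldots,\xi_{t-1}$. I will use nonexpansiveness of $\proj_\cX$, assumption (A3) to bound $\EE[\|G(x_t,\xi_t)\|^2\mid \mathcal{F}_t] \leq L^2$, and assumption (A2) to ensure that $\EE[G(x_t,\xi_t)\mid \mathcal{F}_t]$ is a subgradient of $\varphi$ at $x_t$.

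Expanding $\|x_{t+1}-x^\star\|^2$ via the update rule with step $\eta_t=\tfrac{2}{\mu(t+1)}$, taking conditional expectation, and using $\mu$-strong convexity of $\varphi$ on $\cX$ to lower bound the inner product $\langle \EE[G(x_t,\xi_t)\mid \mathcal{F}_t],\,x_t-x^\star\rangle$ by $\varphi(x_t)-\varphi(x^\star)+\tfrac{\mu}{2}\|x_t-x^\star\|^2$, one obtains
\[
\tfrac{4}{\mu(t+1)}\EE[\varphi(x_t)-\varphi(x^\star)] \leq \left(1-\tfrac{2}{t+1}\right)\EE\|x_t-x^\star\|^2 - \EE\|x_{t+1}-x^\star\|^2 + \tfrac{4L^2}{\mu^2(t+1)^2}.
\]
Multiplying by $\tfrac{\mu(t+1)^2}{4}$ converts this into
\[
(t+1)\EE[\varphi(x_t)-\varphi(x^\star)] \leq \tfrac{\mu(t^2-1)}{4}\EE\|x_t-x^\star\|^2 - \tfrac{\mu(t+1)^2}{4}\EE\|x_{t+1}-x^\star\|^2 + \tfrac{L^2}{\mu}.
\]
Summing over $t=0,\ldots,T-1$, the coefficient of each intermediate $\EE\|x_t-x^\star\|^2$ becomes $\tfrac{\mu(t^2-1)}{4} - \tfrac{\mu t^2}{4} = -\tfrac{\mu}{4}$, and the boundary contributions at $t=0$ and $t=T$ are $-\tfrac{\mu}{4}\EE\|x_0-x^\star\|^2$ and $-\tfrac{\mu T^2}{4}\EE\|x_T-x^\star\|^2$, all nonpositive. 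Dropping them yields $\sum_{t=0}^{T-1}(t+1)\EE[\varphi(x_t)-\varphi(x^\star)] \leq \tfrac{TL^2}{\mu}$.

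Finally, since $\varphi$ is convex and $\bar x=\tfrac{2}{T(T+1)}\sum_{t=0}^{T-1}(t+1)x_t$ is a convex combination of the iterates (which all lie in $\cX$), Jensen's inequality gives
\[
\EE[\varphi(\bar x)-\min \varphi] \leq \tfrac{2}{T(T+1)}\sum_{t=0}^{T-1}(t+1)\EE[\varphi(x_t)-\varphi(x^\star)] \leq \tfrac{2L^2}{\mu(T+1)},
\]
which is the claimed estimate. The only delicate step is calibrating the interaction between the step size $\eta_t=\tfrac{2}{\mu(t+1)}$ and the weights $t+1$: together they arrange that the coefficient of $\EE\|x_t-x^\star\|^2$ produced at time $t$ is exactly one constant smaller than the (negated) coefficient of the same quantity at time $t-1$, enabling telescoping with no residual initial-distance term. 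Everything else is routine bookkeeping.
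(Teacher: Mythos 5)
Your proof is correct and follows essentially the same route as the argument the paper relies on by citing \cite{subgad_easier}: the one-step strongly convex recursion with step size $\tfrac{2}{\mu(t+1)}$, reweighted by $t+1$ so that the squared-distance terms telescope with no residual initial-distance term, and then Jensen's inequality applied to the weighted average $\bar x$. The only cosmetic wrinkle is that your telescoping step at $t=T-1$ references a point $x_T$ that Algorithm~\ref{alg:subgradient} never actually computes; since $\EE\|x_T-x^\star\|^2$ enters with a nonpositive coefficient and is immediately discarded, this is harmless.
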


For the time being, let us assume that $g$ is $\mu$-strongly convex on $\cX$. Later, we will add a small quadratic to $g$ to ensure this to be the case. The algorithm we consider follows an inner outer construction, proposed in \cite{makegradsmall_zhu}. We will fix the number of inner iterations  $T \in \mathbb{N}$. and the number of outer iterations $\cI\in \mathbb{N}$. We set $\varphi^{(0)}=\varphi$ and for each $i=1,\ldots, \cI$ define the quadratic perturbations
$$ \varphi^{(i+1)}(x):= \varphi^{(i)}(x)+\mu 2^{i-1}\|x-\hat {x}_{i+1}\|^2.$$
 Each center $\hat{x}_{i+1}$ is obtained by running $T$ iterations of the subgradient method Algorithm~\ref{alg:subgradient} on $\varphi^{(i)}$. We record the resulting procedure in Algorithm~\ref{alg:subgradient_gradual}. We  emphasize that this algorithm is identical to the method in \cite{makegradsmall_zhu}, with the only difference being the stochastic subgradient method used in the inner loop.

\smallskip

\begin{algorithm}[H]
	\KwData{Initial point $x_1 \in \cX$, strong convexity constant $\mu>0$, an averaging parameter $\lambda>0$, inner iterations $T\in \mathbb{N}$, outer iterations $\cI\in \mathbb{N}$, stochastic oracle $G(\cdot,\cdot)$.}
	
	Set	$\varphi^{(0)}=\varphi$, $G^{(0)}=G$, $\hat{x}_0=x_0$, $\mu_0=\mu$.

	{\bf Step } $i=0,\ldots,\cI$:\\	
		\quad Set
		$\hat x_{i+1}=\textrm{PSSM}^{\rm sc}$($\hat{x}_i, \sum_{j=0}^i\mu_j, G^{(i)},T$)
		
		\quad $\mu_{i+1}= \mu\cdot 2^{i+1}$
		
		\quad Define the function and the oracle	$$\varphi^{(i+1)}(x):= \varphi^{(i)}(x)+\frac{\mu_{i+1}}{2}\|x-\hat{x}_{i+1}\|^2\quad\textrm{ and }\quad G^{(i+1)}(x,\xi):=G^{(i)}(x,\xi)+\mu_{i+1}(x-\hat{x}_{i+1}).$$

	{\bf Return:} $\bar x=\frac{1}{\lambda+\sum_{i=1}^{\cI} \mu_i}(\lambda{\hat x}_{\cI+1}+\sum_{i=1}^{\cI} \mu_i{\hat x}_i)$.		
	\caption{Gradual regularization for strongly convex problems $\textrm{GR}^{\rm sc}(x_1, \mu,\lambda, T,  \cI, G)$}
	\label{alg:subgradient_gradual}
\end{algorithm}
\smallskip

Henceforth, let $\mu_i$, $\varphi^{(i)}$, and $\hat{x}_i$ be generated by Algorithm~\ref{alg:subgradient_gradual}. Observe that by construction, equality 
$$\varphi^{(i)}(x)=\varphi(x)+\sum_{j=1}^{i} \frac{\mu_i}{2}\|x-{\hat x}_i\|^2,$$
holds for all $i=1,\ldots, \cI$. Consequently,  it will be important  to relate the Moreau envelope of $\varphi^{(i)}$ to that of $\varphi$. This is the content of the following two elementary lemmas.

\begin{lem}[Completing the square]\label{lem:comb_square}
Fix a set of points $z_i\in\R^d$ and real $a_i>0$,	for $i=1,\ldots, \mathcal{I}$. Define the convex quadratic
	$$Q(y)=\sum_{i=1}^{\cI}\frac{a_i}{2}\|y-z_i\|^2.$$
	Then equality holds: 
	$$Q(y)=Q\left(\bar z\right)+\tfrac{\sum^{\cI}_{i=1} a_i}{2}\|y-\bar z\|^2,$$	
	where $\bar z=\frac{1}{\sum_{i=1}^{\cI} a_i}\sum_{i=1}^{\cI} a_i z_i$ is the centroid.
\end{lem}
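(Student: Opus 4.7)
The plan is to exploit the fact that $Q$ is a strongly convex quadratic function with constant Hessian $A\cdot I$, where $A:=\sum_{i=1}^{\cI}a_i$. For such a function, the exact second-order Taylor expansion around any minimizer $y^\star$ reads $Q(y)=Q(y^\star)+\tfrac{A}{2}\|y-y^\star\|^2$. Thus the lemma reduces to verifying that the centroid $\bar z$ is the (unique) minimizer of $Q$.

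To carry this out, I would first compute the gradient
\[
\nabla Q(y)=\sum_{i=1}^{\cI} a_i(y-z_i)=A\,y-\sum_{i=1}^{\cI}a_i z_i,
\]
which vanishes exactly at $y=\tfrac{1}{A}\sum_{i=1}^{\cI}a_iz_i=\bar z$. Since each summand $\tfrac{a_i}{2}\|y-z_i\|^2$ has Hessian $a_iI$, the function $Q$ has Hessian $AI$, so it is strongly convex and $\bar z$ is its unique minimizer. The identity then follows from the standard fact that any quadratic with constant Hessian equals its value at the minimizer plus the quadratic form associated to its Hessian evaluated at the displacement.

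If one preferred to avoid quoting the Taylor expansion, the alternative is a direct computation: expand $\|y-z_i\|^2=\|y\|^2-2\langle y,z_i\rangle+\|z_i\|^2$, sum against $a_i/2$, and collect to obtain
\[
Q(y)=\tfrac{A}{2}\|y\|^2-A\langle y,\bar z\rangle+\sum_{i=1}^{\cI}\tfrac{a_i}{2}\|z_i\|^2=\tfrac{A}{2}\|y-\bar z\|^2-\tfrac{A}{2}\|\bar z\|^2+\sum_{i=1}^{\cI}\tfrac{a_i}{2}\|z_i\|^2.
\]
The same expansion with $y=\bar z$ gives $Q(\bar z)=-\tfrac{A}{2}\|\bar z\|^2+\sum\tfrac{a_i}{2}\|z_i\|^2$, and subtracting the two lines yields the claim.

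There is essentially no obstacle; the statement is a standard algebraic identity, and the only thing to be careful about is bookkeeping the cross terms $\langle y,z_i\rangle$ so that the definition of the centroid $\bar z$ appears correctly. Either route (first-order optimality plus Taylor, or direct expansion) produces the result in a few lines.
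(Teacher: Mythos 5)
Your first route---compute $\nabla Q$, observe it vanishes at $\bar z$, and invoke the exact second-order Taylor expansion of a quadratic with Hessian $AI$ about its minimizer---is precisely the paper's argument ("taking the derivative shows that $Q$ is minimized at $\bar z$; the result follows"), just spelled out in more detail. Both your Taylor route and your alternative direct expansion are correct.
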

\begin{proof}
Taking the derivative shows that $Q(\cdot)$ is minimized  at $\bar z$. The result follows.
\end{proof}

\begin{lem}[Moreau envelope of the regularization]\label{lem:reg_more2}
 Consider a function $h\colon\R^d\to\R\cup\{+\infty\}$ 
  and define the quadratic perturbation $$f(x)=h(x)+\sum_{i=1}^{\cI}\frac{a_i}{2}\|x-z_i\|^2,$$
	 for some $z_i\in\R^d$ and $a_i>0$, with $i=1,\ldots,\cI$. 
  Then for any $\lambda>0$, the Moreau envelopes of $h$ and $f$ are related by the expression
	$$\nabla f_{1/\lambda}(x)=\tfrac{\lambda}{\lambda+A}\left(\nabla h_{1/(\lambda+A)}(\bar x)+\sum_{i=1}^{\cI} a_i(x-z_i)\right),$$
	where we define $A:=\sum_{i=1}^{\cI} a_i$ and $\bar x:=\frac{1}{\lambda+A}\left(\lambda x+\sum_{i=1}^{\cI} a_iz_i\right)$ is the centroid.
\end{lem}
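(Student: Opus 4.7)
The plan is to unfold the Moreau envelope, treat the regularizer $\tfrac{\lambda}{2}\|y-x\|^2$ as simply one more summand of the quadratic perturbation, and then reduce the problem to a Moreau envelope of $h$ via Lemma~\ref{lem:comb_square}. Writing out the definition gives
$$f_{1/\lambda}(x)=\min_{y\in\R^d}\left\{h(y)+\sum_{i=1}^{\cI}\tfrac{a_i}{2}\|y-z_i\|^2+\tfrac{\lambda}{2}\|y-x\|^2\right\}.$$
Applying Lemma~\ref{lem:comb_square} to the $\cI+1$ pairs $\{(a_i,z_i)\}_{i=1}^{\cI}\cup\{(\lambda,x)\}$, the sum of quadratics equals $c(x)+\tfrac{\lambda+A}{2}\|y-\bar x\|^2$, where $c(x)$ is a constant depending only on $x$ and $\bar x$ is exactly the centroid in the statement. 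Consequently
$$f_{1/\lambda}(x)=c(x)+h_{1/(\lambda+A)}(\bar x).$$

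Next I would match proximal points: the minimizer defining $f_{1/\lambda}(x)$ coincides with the minimizer defining $h_{1/(\lambda+A)}(\bar x)$. Using the gradient formula \eqref{eqn:grad_form} at both scales yields
$$\prox_{f/\lambda}(x)=\bar x-\tfrac{1}{\lambda+A}\,\nabla h_{1/(\lambda+A)}(\bar x),\qquad \nabla f_{1/\lambda}(x)=\lambda\bigl(x-\prox_{f/\lambda}(x)\bigr).$$
A direct computation from the definition of $\bar x$ gives the identity
$$x-\bar x=\tfrac{1}{\lambda+A}\sum_{i=1}^{\cI}a_i(x-z_i),$$
and substituting both expressions into the gradient formula collapses immediately to the claimed identity.

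There is no substantial obstacle here; the step requiring most care is bookkeeping the two conventions for the Moreau parameter, since in this paper $\varphi_\lambda$ denotes the envelope whose quadratic coefficient is $\tfrac{1}{2\lambda}$, so that $f_{1/\lambda}$ corresponds to coefficient $\tfrac{\lambda}{2}$ and its gradient has the form $\lambda(x-\prox_{f/\lambda}(x))$. Once the scaling is tracked consistently through the reduction, the proof amounts to completing the square once and applying the chain rule through the affine map $x\mapsto\bar x$.
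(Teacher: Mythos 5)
Your proof is correct, and its first half is identical to the paper's: both unfold the envelope, absorb $\tfrac{\lambda}{2}\|y-x\|^2$ into the quadratic sum, and invoke Lemma~\ref{lem:comb_square} to obtain $f_{1/\lambda}(x)=Q(\bar x)+h_{1/(\lambda+A)}(\bar x)$. The endgame differs. The paper differentiates this value identity directly in $x$, which requires the chain rule through the affine map $x\mapsto\bar x$ together with the gradient of $Q(\bar x)$ viewed as a function of $x$; the resulting cross term $\lambda\bigl(\tfrac{\lambda}{\lambda+A}-1\bigr)(\bar x-x)$ then has to be cancelled against part of $\tfrac{\lambda}{\lambda+A}\sum_i a_i(\bar x - z_i)$. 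You instead observe that the two minimization problems share the same minimizer, so $\prox_{(1/\lambda)f}(x)=\prox_{(1/(\lambda+A))h}(\bar x)$, and then read off the gradient from \eqref{eqn:grad_form} at both scales using $x-\bar x=\tfrac{1}{\lambda+A}\sum_i a_i(x-z_i)$. Your route buys a cleaner final computation with no cancellation to track, at the cost of leaning on the prox-gradient identity \eqref{eqn:grad_form} (which implicitly assumes $h$, and hence $f$, is closed proper convex so that the prox is single-valued and the envelope is $C^1$ --- an assumption the paper's differentiation step also needs but does not state). Both arguments are valid and of essentially the same length.
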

\begin{proof}
		By definition of the Moreau envelope, we have
	\begin{equation}\label{eqn:moreau_represe}
	f_{1/\lambda}(x)=\argmin_y\left\{h(y)+\sum_{i=1}^{\cI}\tfrac{a_i}{2}\|y-z_i\|^2+\tfrac{\lambda}{2}\|y-x\|^2\right\}.
	\end{equation}
	We next complete the square in the quadratic term. Namely define the convex quadratic:
	\begin{equation*}
	Q(y):=\tfrac{\lambda}{2}\|y-x\|^2+\sum_{i=1}^{\cI}\tfrac{a_i}{2}\|y-z_i\|^2.
	\end{equation*}
	Lemma~\ref{lem:comb_square} directly yields the representation 
	$Q(y)=Q(\bar x)+\tfrac{\lambda+A}{2}\|y-\bar x\|^2.$
	Combining with \eqref{eqn:moreau_represe}, we deduce
	$$f_{1/\lambda}(x)=h_{1/(\lambda+A)}(\bar x)+Q(\bar x).$$
	Differentiating in $x$ yields the equalities 
	\begin{align*}
	\nabla f_{1/\lambda}(x)&=\tfrac{\lambda}{\lambda+A}\nabla h_{1/(\lambda+A)}(\bar x)+\lambda\left(\tfrac{\lambda}{\lambda+A}-1\right)(\bar x-x)+\tfrac{\lambda}{\lambda+A}\sum_{i=1}^{\cI} a_i(\bar x-z_i)\\
	&=\tfrac{\lambda}{\lambda+A}\nabla h_{1/(\lambda+A)}(\bar x)+\tfrac{\lambda}{\lambda+A}\sum_{i=1}^{\cI} a_i(x-z_i),
	\end{align*}
	as claimed.
	\end{proof}

The following is the key estimate from \cite[Claim 8.3]{makegradsmall_zhu}.

\begin{lem}\label{lem:key_lem}
	Suppose that for each index $i=1,2,\ldots,\cI$, the vectors ${\hat x}_i$ satisfy
	$$\EE[\varphi^{(i-1)}({\hat x}_i)-\min\varphi^{(i-1)} ]\leq \delta_i.$$
	Then the inequality holds:
	$$\displaystyle\EE\left[\sum_{i=1}^{\cI} \mu_i \|x_{\cI}^*-{\hat x}_i\|\right]\leq 4\sum_{i=1}^{\cI} \sqrt{\delta_i \mu_i},$$
	where 
	$x_{\cI}^*$ is the minimizer of $\varphi^{\cI}$.
\end{lem}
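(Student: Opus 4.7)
The plan is to transfer the near-optimality hypothesis into quantitative distance bounds via the strong convexity of each $\varphi^{(i)}$, and then to exploit the geometric doubling $\mu_j = \mu\cdot 2^j$ to make the weighted sum telescope. Throughout set $M_i := \sum_{j=0}^{i}\mu_j = \mu(2^{i+1}-1)$, which is the strong-convexity constant of $\varphi^{(i)}$, and let $x_i^*$ denote its unique minimizer; in particular $x_{\cI}^*$ is the minimizer of $\varphi^{(\cI)}$ referenced in the statement.

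First I would record two strong-convexity estimates for $\hat{x}_i$. Directly from the hypothesis, the $M_{i-1}$-strong convexity of $\varphi^{(i-1)}$ gives $\EE\|\hat{x}_i-x_{i-1}^*\|^2\leq 2\delta_i/M_{i-1}$. In addition, because the quadratic added at step $i$ vanishes at $\hat{x}_i$ one has $\varphi^{(i)}(\hat{x}_i)=\varphi^{(i-1)}(\hat{x}_i)$, while $\min\varphi^{(i)}\geq\min\varphi^{(i-1)}$ since $\varphi^{(i)}\geq\varphi^{(i-1)}$ pointwise; these combine to give $\EE[\varphi^{(i)}(\hat{x}_i)-\min\varphi^{(i)}]\leq\delta_i$, and hence $\EE\|\hat{x}_i-x_i^*\|^2\leq 2\delta_i/M_i$ as well.

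Next I would control the drift of consecutive minimizers. Strong convexity of $\varphi^{(j)}$ at $x_j^*$ gives $\|x_j^*-x_{j-1}^*\|^2\leq (2/M_j)(\varphi^{(j)}(x_{j-1}^*)-\varphi^{(j)}(x_j^*))$. The identity $\varphi^{(j)}(x_{j-1}^*)=\varphi^{(j-1)}(x_{j-1}^*)+\tfrac{\mu_j}{2}\|x_{j-1}^*-\hat{x}_j\|^2$, together with $\varphi^{(j)}(x_j^*)\geq\varphi^{(j-1)}(x_j^*)\geq\varphi^{(j-1)}(x_{j-1}^*)$, forces the excess to be at most the added quadratic, so $\|x_j^*-x_{j-1}^*\|^2\leq(\mu_j/M_j)\|x_{j-1}^*-\hat{x}_j\|^2$. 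Chaining with the display from the previous step and invoking Jensen's inequality together with the elementary bounds $M_j\geq\mu_j$ and $M_{j-1}\geq\mu_j/2$ (both immediate from the doubling) yields $\EE\|x_j^*-x_{j-1}^*\|\leq 2\sqrt{\delta_j/\mu_j}$.

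Finally I would assemble via the triangle inequality $\|x_{\cI}^*-\hat{x}_i\|\leq\|x_i^*-\hat{x}_i\|+\sum_{j=i+1}^{\cI}\|x_j^*-x_{j-1}^*\|$: multiplying by $\mu_i$, summing over $i$, and swapping the order of summation in the second term produces $\sum_{j=2}^{\cI}\|x_j^*-x_{j-1}^*\|\sum_{i=1}^{j-1}\mu_i$, and the geometric identity $\sum_{i=1}^{j-1}\mu_i=\mu_j-2\mu\leq\mu_j$ absorbs the interior sum cleanly. Inserting the per-term bounds gives at most $2\sum_j\sqrt{\mu_j\delta_j}$ from the drift part and $\sqrt{2}\sum_i\sqrt{\mu_i\delta_i}$ from the $\|\hat{x}_i-x_i^*\|$ part, for a total of $(\sqrt{2}+2)\sum_i\sqrt{\mu_i\delta_i}\leq 4\sum_i\sqrt{\mu_i\delta_i}$. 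The conceptual core is the drift bound relating $x_j^*$ and $x_{j-1}^*$ through the freshly added quadratic; the main obstacle I anticipate is bookkeeping — the geometric-series inequalities $M_j\geq\mu_j$, $M_{j-1}\geq\mu_j/2$, and $\sum_{i<j}\mu_i\leq\mu_j$ must all be used sharply so that the stated constant $4$ survives.
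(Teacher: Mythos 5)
Your proof is correct. Note that the paper itself does not prove this lemma at all: it is stated as an imported result, citing Claim 8.3 of the Allen-Zhu reference, so there is no in-paper argument to compare against. Your write-up therefore supplies a self-contained proof along the standard lines: (i) strong convexity converts the suboptimality hypothesis into $\EE\|\hat x_i-x_{i-1}^*\|^2\le 2\delta_i/M_{i-1}$ and (by the vanishing of the freshly added quadratic at its own center) $\EE\|\hat x_i-x_i^*\|^2\le 2\delta_i/M_i$; (ii) the minimizer-drift bound $\|x_j^*-x_{j-1}^*\|^2\le(\mu_j/M_j)\|x_{j-1}^*-\hat x_j\|^2$ follows correctly from comparing $\varphi^{(j)}$ and $\varphi^{(j-1)}$ at the two minimizers; and (iii) the geometric facts $M_j\ge\mu_j$, $M_{j-1}\ge\mu_j/2$, and $\sum_{i<j}\mu_i\le\mu_j$ are all valid for $\mu_j=\mu 2^j$, giving the total constant $\sqrt2+2\le 4$. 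Each step checks out, including the passage through expectations (the strong-convexity inequalities hold pointwise in the randomness, and Jensen is applied legitimately). One cosmetic caution: your $M_i:=\sum_{j=0}^i\mu_j$ clashes with the paper's later convention $M_i:=\sum_{j=1}^i\mu_j$; since you only use it as the strong-convexity modulus of $\varphi^{(i)}$ this is harmless, but you should flag the discrepancy if this proof were to be inserted into the text.
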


Henceforth, set 
$$M_i:=\sum_{j=1}^{i} \mu_j \qquad \textrm{ and }\qquad  M:=M_{\cI}.$$
By convention, we will set $M_{0}=0$.
Combining Lemmas~\ref{lem:reg_more2} and \ref{lem:key_lem}, we arrive at the following basic guarantee of the method.

\begin{cor}\label{cor:mainaz}
		Suppose for $i=1,2,\ldots,\cI+1$, the vectors ${\hat x}_i$ satisfy
		$$\EE[\varphi^{(i-1)}({\hat x}_i)-\min\varphi^{(i-1)} ]\leq \delta_i.$$
	Then the inequality holds:

	$$\EE\|\nabla \varphi_{1/(\lambda+M)}(\bar x)\|\leq \left(\lambda+2 M\right) \sqrt{\frac{2\delta_{\cI+1}}{\mu+M}}+4\sum_{i=1}^{\cI}\sqrt{\delta_i \mu_i},$$
where $\bar x=\frac{1}{\lambda+M}(\lambda \hat x_{\cI+1}+ \sum_{i=1}^{\cI} \mu_i \hat x_i)$.
\end{cor}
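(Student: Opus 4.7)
The plan is to relate $\nabla\varphi_{1/(\lambda+M)}(\bar x)$ to $\nabla\varphi^{(\cI)}_{1/\lambda}(\hat x_{\cI+1})$ via Lemma~\ref{lem:reg_more2}, bound that Moreau-envelope gradient using the strong convexity of $\varphi^{(\cI)}$, and clean up the leftover sum $\sum_{i=1}^\cI \mu_i(\hat x_{\cI+1}-\hat x_i)$ by routing through the minimizer $x^*_{\cI}$ of $\varphi^{(\cI)}$ and invoking Lemma~\ref{lem:key_lem}.

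The first step is to apply Lemma~\ref{lem:reg_more2} with $h=\varphi$, $a_i=\mu_i$, $z_i=\hat x_i$, evaluation point $x=\hat x_{\cI+1}$, and envelope parameter $\lambda$. Then $A=M$, the centroid produced by the lemma is exactly our $\bar x$, and the identity becomes
\[
\nabla\varphi^{(\cI)}_{1/\lambda}(\hat x_{\cI+1})=\tfrac{\lambda}{\lambda+M}\Bigl(\nabla\varphi_{1/(\lambda+M)}(\bar x)+\sum_{i=1}^{\cI}\mu_i(\hat x_{\cI+1}-\hat x_i)\Bigr).
\]
Solving for $\nabla\varphi_{1/(\lambda+M)}(\bar x)$ and applying the triangle inequality yields
\[
\|\nabla\varphi_{1/(\lambda+M)}(\bar x)\|\le \tfrac{\lambda+M}{\lambda}\|\nabla\varphi^{(\cI)}_{1/\lambda}(\hat x_{\cI+1})\|+\sum_{i=1}^{\cI}\mu_i\|\hat x_{\cI+1}-\hat x_i\|.
\]

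Next I would bound the Moreau-envelope gradient of the strongly convex function $\varphi^{(\cI)}$. Since $\varphi$ is $\mu$-strongly convex and each added quadratic contributes $\mu_i$, the function $\varphi^{(\cI)}$ is $(\mu+M)$-strongly convex. Writing $\hat y:=\prox_{\varphi^{(\cI)}/\lambda}(\hat x_{\cI+1})$, comparing the optimality condition at $\hat y$ with the value at $x^*_\cI$ shows $\|\hat x_{\cI+1}-\hat y\|\le \|\hat x_{\cI+1}-x^*_\cI\|$, and strong convexity gives $\|\hat x_{\cI+1}-x^*_\cI\|^2\le \tfrac{2}{\mu+M}(\varphi^{(\cI)}(\hat x_{\cI+1})-\min\varphi^{(\cI)})$. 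Multiplying by $\lambda$ and taking expectations (with Jensen for the square root) produces
\[
\EE\|\nabla\varphi^{(\cI)}_{1/\lambda}(\hat x_{\cI+1})\|\le \lambda\sqrt{\tfrac{2\delta_{\cI+1}}{\mu+M}},
\]
so the first term above contributes $(\lambda+M)\sqrt{2\delta_{\cI+1}/(\mu+M)}$ after multiplying by $(\lambda+M)/\lambda$.

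For the residual sum, I split via the triangle inequality $\|\hat x_{\cI+1}-\hat x_i\|\le \|\hat x_{\cI+1}-x^*_\cI\|+\|x^*_\cI-\hat x_i\|$. Summing with weights $\mu_i$, the first piece contributes $M\|\hat x_{\cI+1}-x^*_\cI\|$, whose expectation is bounded by $M\sqrt{2\delta_{\cI+1}/(\mu+M)}$ by the same strong convexity argument. The second piece is exactly the quantity controlled by Lemma~\ref{lem:key_lem}, giving $\EE\bigl[\sum_i \mu_i\|x^*_\cI-\hat x_i\|\bigr]\le 4\sum_i\sqrt{\delta_i\mu_i}$. Combining all contributions, the coefficient of $\sqrt{2\delta_{\cI+1}/(\mu+M)}$ totals $(\lambda+M)+M=\lambda+2M$, matching the claim.

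The only delicate point is getting the sharper strong-convexity bound on $\|\nabla\varphi^{(\cI)}_{1/\lambda}(\hat x_{\cI+1})\|$: the generic bound $\|\nabla f_{1/\lambda}\|^2\le 2\lambda(f-\min f)$ would yield a factor $\sqrt{\lambda}$ that destroys the correct dependence on $\mu+M$. Replacing it with the strongly convex version (which controls distance to $x^*_\cI$ rather than the suboptimality drop under the envelope) is what makes the coefficient $\lambda+2M$ appear correctly.
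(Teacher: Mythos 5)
Your proposal is correct and follows essentially the same route as the paper: apply Lemma~\ref{lem:reg_more2} to express $\nabla\varphi_{1/(\lambda+M)}(\bar x)$ in terms of $\nabla\varphi^{(\cI)}_{1/\lambda}(\hat x_{\cI+1})$, bound the latter by $\lambda\|\hat x_{\cI+1}-x^*_\cI\|$, route the residual sum through $x^*_\cI$ via the triangle inequality, and finish with the $(\mu+M)$-strong convexity bound and Lemma~\ref{lem:key_lem}. The only cosmetic difference is that you justify $\|\nabla\varphi^{(\cI)}_{1/\lambda}(\hat x_{\cI+1})\|\le\lambda\|\hat x_{\cI+1}-x^*_\cI\|$ by comparing prox objective values, whereas the paper cites the $\lambda$-Lipschitz continuity of $\nabla\varphi^{(\cI)}_{1/\lambda}$ together with its vanishing at $x^*_\cI$; both yield the same estimate.
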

\begin{proof}
Fix an arbitrary  point $x$ and set $\bar x=\frac{1}{\lambda+M}(\lambda x+ \sum_{i=1}^{\cI} \hat x_i)$. Then Lemma~\ref{lem:reg_more2}, along with a triangle inequality, directly implies 
\begin{align*}
\|\nabla \varphi_{1/(\lambda+M)}(\bar x)\|&\leq \left(1+\tfrac{M}{\lambda}\right) \|\nabla \varphi^{(\cI)}_{1/\lambda}(x)\|+\sum_{i=1}^{\cI} \mu_i\|x-\hat{x}_i\|\\
&\leq \left(1+\tfrac{M}{\lambda}\right) \|\nabla \varphi_{1/\lambda}^{(\cI)}(x)\|+\sum_{i=1}^{\cI} \mu_i(\|x-x^*_{\cI}\| +\|x^*_{\cI}-\hat{x}_i\|)\\
&\leq  \left(1+\tfrac{M}{\lambda}\right) \|\nabla \varphi_{1/\lambda}^{(\cI)}(x)\|+M\|x-x_{\cI}^*\|+\sum_{i=1}^{\cI} \mu_i \|x^*_{\cI}-\hat{x}_i\|\\
&\leq(\lambda + 2M)\|x-x_{\cI}^*\|+\sum_{i=1}^{\cI} \mu_i \|x^*_{\cI}-\hat{x}_i\|.
\end{align*}
where the last inequality uses that $\nabla \varphi^{(\cI)}_{1/\lambda}$ is $\lambda$-Lipschitz continuous and $\nabla \varphi^{(\cI)}_{1/\lambda}(x_\cI^\ast) = 0$ to deduce that $\|\nabla \varphi_{1/\lambda}^{(\cI)}(x)\| \leq \lambda \|x - x_\cI^\ast\|$.
Using strong convexity of $\varphi^{\cI}$, we deduce
$$\|x-x_{\cI}^*\|^2\leq \tfrac{2}{\mu+M}(\varphi^{(\cI)}(x)-\varphi^{(\cI)}(x_{\cI}^*)).$$
Setting $x=\hat x_{\cI+1}$, taking expectations, and applying Lemma~\ref{lem:key_lem} completes the proof. 
\end{proof}


Let us now determine $\delta_i>0$ by invoking Theorem~\ref{thm:conv_str_subgrad} for each function $\varphi^{(i)}$. Observe
$$\EE_\xi\|G^{(i)}(x,\xi)\|^2\leq 2(L^2+D^2M_i^2).$$
Thus Theorem~\ref{thm:conv_str_subgrad} guarantees the estimates:
\begin{equation}\label{expect:fromconv_subgrad}
\EE[\varphi^{(i-1)}({\hat x}_i)-\min \varphi^{(i-1)}]\leq\frac{4(L^2+D^2M_{i-1}^2)}{(T+1)(\mu+M_{i-1})},
\end{equation}
Hence for $i=1,\ldots, \cI$, we may set $\delta_i$ to be the right-hand side of \eqref{expect:fromconv_subgrad}. Applying Corollary~\ref{cor:mainaz}, we therefore deduce
\begin{equation}\label{eqn:cray_comp_anal_induct}
\begin{aligned}
\EE\|\nabla \varphi_{1/(\lambda+M)}(\bar x)\|&\leq \left(\lambda+2 M\right) \sqrt{\frac{2\delta_{\cI+1}}{\mu+M}}+4\sum_{i=1}^{\cI}\sqrt{\delta_i \mu_i}\\
&\leq\frac{1}{\sqrt{T+1}}\left((\lambda+2M)\sqrt{\frac{8(L^2+D^2M^2)}{(\mu+M)^2}}+4\sum_{i=1}^{\cI}\sqrt{\frac{4(L^2+D^2M_{i-1}^2)}{(\mu+M_{i-1})}\cdot \mu_i}\right).
\end{aligned}
\end{equation}
Clearly we have $\tfrac{\mu_1}{\mu} = 2$, while for all $i>1$, we also obtain
$$\frac{\mu_i}{\mu+M_{i-1}}\leq \frac{\mu_i}{\mu+\mu_{i-1}}=\frac{2^i}{1+2^{i-1}}\leq 2.$$
Hence, continuing \eqref{eqn:cray_comp_anal_induct}, we conclude 
\begin{align*}
\EE\|\nabla \varphi_{1/(\lambda+M)}(\bar x)\|&\leq \frac{1}{\sqrt{T+1}}\left(\sqrt{8}\cdot(\lambda+2M)\sqrt{\left(\tfrac{ L}{M}\right)^2+ D^2}+8\sqrt{2}\cdot |\cI|\cdot\sqrt{L^2+D^2M^2}\right)
\end{align*}
In particular, by setting $\cI=\log_2(1+\frac{\lambda}{2\mu})$, we may ensure $M = \lambda$. For simplicity, we assume the former is an integer.
Thus we have proved the following key result.
\begin{thm}[Convergence on strongly convex functions]\label{thm:conv_strong_conv}
Suppose $g$ is $\mu$-strongly convex on $\cX$ and we set $\cI=\log_2(1+\frac{\lambda}{2\mu})$ for some $\lambda>0$. Then  $\bar x$ returned by Algorithm~\ref{alg:subgradient_gradual} satisfies
\begin{align*}
\EE\|\nabla \varphi_{1/(2\lambda)}(\bar x)\|&\leq \frac{\left(14\sqrt{2}\cdot\log_2(1+\tfrac{\lambda}{2\mu})\right)\cdot\sqrt{L^2+D^2\lambda^2}}{\sqrt{T+1}}
\end{align*}
\end{thm}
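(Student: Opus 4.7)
The strategy is to apply Corollary~\ref{cor:mainaz} to the sequence $\hat x_1, \ldots, \hat x_{\cI+1}$ produced by Algorithm~\ref{alg:subgradient_gradual}; this reduces the task to controlling the expected suboptimality $\delta_i$ of $\hat x_i$ on $\varphi^{(i-1)}$, and then to choosing $\cI$ so that $M=\lambda$.

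To obtain the $\delta_i$'s, I would invoke Theorem~\ref{thm:conv_str_subgrad} on each inner call of Algorithm~\ref{alg:subgradient}. This needs two inputs: the strong-convexity constant of $\varphi^{(i-1)}$ on $\cX$, which is $\mu+M_{i-1}$ by construction, and a uniform second-moment bound on the perturbed oracle $G^{(i-1)}(x,\xi) = G(x,\xi) + \sum_{j=1}^{i-1}\mu_j(x-\hat x_j)$. Young's inequality combined with the diameter bound $\|x-\hat x_j\|\leq D$ gives $\EE_\xi \|G^{(i-1)}(x,\xi)\|^2 \leq 2(L^2+D^2 M_{i-1}^2)$, whence Theorem~\ref{thm:conv_str_subgrad} yields
$$\delta_i \leq \frac{4(L^2+D^2 M_{i-1}^2)}{(T+1)(\mu+M_{i-1})}.$$

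I would then substitute these $\delta_i$'s into Corollary~\ref{cor:mainaz}, factor out $1/\sqrt{T+1}$, and tame the sum using two elementary facts about the doubling sequence $\mu_i=\mu\cdot 2^i$: (i) $\mu_i/(\mu+M_{i-1})\leq 2$ for every $i\geq 1$, and (ii) $M_{i-1}\leq M$. Each term of the sum is then at most $\sqrt{8(L^2+D^2M^2)}$, so the whole sum contributes at most $8\sqrt{2}\,\cI\sqrt{L^2+D^2M^2}/\sqrt{T+1}$, while the head term contributes at most $(\lambda+2M)\sqrt{8(L^2+D^2M^2)}/((\mu+M)\sqrt{T+1})$.

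The last step is to set $\cI=\log_2(1+\lambda/(2\mu))$, so that $M = \mu\sum_{i=1}^{\cI} 2^i = \mu(2^{\cI+1}-2) = \lambda$; this is precisely what converts the envelope parameter $1/(\lambda+M)$ into $1/(2\lambda)$. With $M=\lambda$, the head term simplifies to $3\lambda\sqrt{8(L^2+D^2\lambda^2)}/(\mu+\lambda)\leq 6\sqrt{2}\sqrt{L^2+D^2\lambda^2}$, and combining with the sum gives a total bound of the form $\sqrt{2}(6+8\cI)\sqrt{L^2+D^2\lambda^2}/\sqrt{T+1}$. Since $\cI\geq 1$, one has $6+8\cI\leq 14\cI$, matching the claimed constant $14\sqrt{2}\cI$. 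The main bookkeeping hazard is keeping the constants aligned through the Young-inequality step and verifying the ratio bound $\mu_i/(\mu+M_{i-1})\leq 2$ uniformly in $i$; without the latter, the sum would fail to telescope into a term with only logarithmic dependence on $\lambda/\mu$.
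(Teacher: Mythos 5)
Your proposal is correct and follows essentially the same route as the paper: invoke Theorem~\ref{thm:conv_str_subgrad} with the oracle bound $2(L^2+D^2M_{i-1}^2)$ to get the $\delta_i$'s, feed them into Corollary~\ref{cor:mainaz}, control the sum via $\mu_i/(\mu+M_{i-1})\leq 2$, and choose $\cI$ so that $M=\lambda$. The constant bookkeeping ($6\sqrt{2}+8\sqrt{2}\,\cI\leq 14\sqrt{2}\,\cI$) matches the paper's as well.
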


When $g$ is not strongly convex, we can simply add a small quadratic to the function and run Algorithm~\ref{alg:subgradient_gradual}. For ease of reference, we record the full procedure in Algorithm~\ref{alg:subgradient_gradual_not_strong_conv}

\smallskip

\begin{algorithm}[H]
    	\KwData{Initial point $x_{\rm c} \in \cX$, regularization parameter $\mu>0$, an averaging parameter $\lambda>0$, inner iterations $T\in \mathbb{N}$, outer iterations $\cI\in \mathbb{N}$, stochastic oracle $G(\cdot,\cdot)$.}
	
	Set	${\widehat\varphi}(x):=\varphi(x)+\frac{\mu}{2}\|x-x_{\rm c}\|^2$, $\widehat G(x, \xi) =G(x, \xi) + \mu(x - x_{\rm c})$, $x_0= x_{\rm c}$.

	Set $\bar x = \textrm{GR}^{\rm sc}(x_{\rm c}, \mu,\lambda/2, T,  \cI, \widehat G)$

	{\bf Return:} $\bar z=\tfrac{\mu}{\mu+\lambda} x_{\rm c}+\tfrac{\lambda}{\mu+\lambda}\bar x$.		
	\caption{Gradual regularization for non strongly convex problems}
	\label{alg:subgradient_gradual_not_strong_conv}
\end{algorithm}
\smallskip

Our main theorem now follows. 
\begin{thm}[Convergence on convex functions after regularization]
Let $\rho > 0$ be a fixed constant, and suppose we are given a target accuracy $\varepsilon\leq 2\rho D$. Set $\mu:=\frac{\varepsilon}{2D}$,  $\lambda:=2\rho-\frac{\varepsilon}{2D}$, and $\cI = \log_2(\frac{3}{4} + \frac{\rho D}{\varepsilon})$. Then for any $T > 0$, Algorithm~\ref{alg:subgradient_gradual_not_strong_conv} returns a point $\bar z$ satisfying:
\begin{align*}
\EE\|\nabla \varphi_{1/(2\rho)}(\bar z)\|&\leq \frac{\left(28\sqrt{2}\cdot\log_2(\tfrac{3}{4}+\tfrac{\rho D}{\varepsilon})\right)\cdot\sqrt{2L^2+3\rho^2D^2}}{\sqrt{T+1}} + \frac{\varepsilon}{2} 
\end{align*}
Setting the right hand side to $\varepsilon$ and solving for $T$, we deduce that it suffices to make 
$$O\left(\frac{\log^3(\frac{\rho D}{\varepsilon})(L^2+\rho^2D^2)}{\varepsilon^2}\right)$$
calls to $\proj_{\cX}$ and to the stochastic subgradient oracle in order to find a point $\bar z\in \cX$ satisfying $\EE\|\nabla \varphi_{1/(2\rho)}(\bar z)\|\leq \varepsilon$.   
\end{thm}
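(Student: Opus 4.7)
The plan is to reduce the general convex case to the strongly convex case covered by Theorem~\ref{thm:conv_strong_conv}, and then convert the resulting Moreau-envelope bound for $\widehat\varphi$ into one for $\varphi$ via Lemma~\ref{lem:reg_more2}.

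First, I would verify that the inputs to $\textrm{GR}^{\rm sc}$ satisfy the hypotheses of Theorem~\ref{thm:conv_strong_conv}. The perturbed function $\widehat\varphi = \varphi + \tfrac{\mu}{2}\|\cdot - x_{\rm c}\|^2$ is $\mu$-strongly convex on $\cX$, and $\widehat G$ is an unbiased stochastic subgradient oracle for $\widehat g$ whose second moment is bounded on $\cX$ by
\[
\EE_\xi\|\widehat G(x,\xi)\|^2 \le 2\EE_\xi\|G(x,\xi)\|^2 + 2\mu^2\|x-x_{\rm c}\|^2 \le 2L^2+2\mu^2 D^2.
\]
A direct computation with $\mu = \varepsilon/(2D)$ and $\lambda = 2\rho - \varepsilon/(2D)$ gives $\log_2\bigl(1+(\lambda/2)/(2\mu)\bigr) = \log_2(3/4 + \rho D/\varepsilon) = \cI$, so the outer-loop count matches the setting of Theorem~\ref{thm:conv_strong_conv}. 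Invoking that theorem for $\widehat\varphi$, with averaging parameter $\lambda/2$ (hence Moreau scale $1/\lambda$), yields
\[
\EE\|\nabla\widehat\varphi_{1/\lambda}(\bar x)\|\le \frac{14\sqrt{2}\,\log_2(3/4+\rho D/\varepsilon)\,\sqrt{2L^2+2\mu^2D^2+D^2\lambda^2/4}}{\sqrt{T+1}}.
\]

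Next, I would apply Lemma~\ref{lem:reg_more2} to $h = \varphi$ with a single center $z_1 = x_{\rm c}$ of weight $a_1 = \mu$ and Moreau parameter equal to $\lambda$. Two arithmetic identities are crucial: $\lambda + \mu = 2\rho$, so $\nabla h_{1/(\lambda+\mu)} = \nabla \varphi_{1/(2\rho)}$; and the lemma's centroid $(\lambda \bar x + \mu x_{\rm c})/(\lambda+\mu)$ coincides exactly with the algorithm's returned point $\bar z$. The lemma then asserts
\[
\nabla\widehat\varphi_{1/\lambda}(\bar x) = \frac{\lambda}{\lambda+\mu}\Bigl(\nabla\varphi_{1/(2\rho)}(\bar z) + \mu(\bar x - x_{\rm c})\Bigr),
\]
so rearranging and applying the triangle inequality gives
\[
\|\nabla\varphi_{1/(2\rho)}(\bar z)\| \le \frac{2\rho}{\lambda}\|\nabla\widehat\varphi_{1/\lambda}(\bar x)\| + \mu\|\bar x - x_{\rm c}\|.
\]
The hypothesis $\varepsilon \le 2\rho D$ implies $\lambda \ge \rho$, hence $2\rho/\lambda \le 2$; and $\mu\|\bar x - x_{\rm c}\| \le \mu D = \varepsilon/2$. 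A final bound $2L^2 + 2\mu^2 D^2 + D^2\lambda^2/4 \le 2L^2 + 3\rho^2 D^2$, using $\mu \le \rho$ and $\lambda/2 \le \rho$, turns the prefactor $14\sqrt{2}$ into $28\sqrt{2}$ and yields the claimed error estimate.

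The complexity count is then routine: setting the leading $1/\sqrt{T+1}$ term equal to $\varepsilon/2$ yields $T = \Theta\bigl(\log^2(\rho D/\varepsilon)(L^2 + \rho^2 D^2)/\varepsilon^2\bigr)$ inner iterations, and multiplying by $\cI = \Theta(\log(\rho D/\varepsilon))$ outer iterations produces the stated $O(\log^3(\rho D/\varepsilon)(L^2+\rho^2 D^2)/\varepsilon^2)$ oracle-call bound. The main bookkeeping subtlety---and the one place where a slip is easy---is disentangling the two uses of the letter $\lambda$: as the averaging parameter inside $\textrm{GR}^{\rm sc}$ (where here it equals $\lambda/2$) and as the Moreau scale in Lemma~\ref{lem:reg_more2} (where it equals $\lambda$). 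Once these are tracked carefully, the proof is a straightforward composition of the existing guarantees.
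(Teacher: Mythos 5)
Your proposal is correct and follows essentially the same route as the paper: invoke Theorem~\ref{thm:conv_strong_conv} on $\widehat\varphi$ with averaging parameter $\lambda/2$ and the inflated variance bound $2(L^2+\mu^2D^2)$, then transfer the bound to $\varphi_{1/(2\rho)}$ at the point $\bar z$ via Lemma~\ref{lem:reg_more2} with the single center $x_{\rm c}$. The only difference is that you spell out the ``elementary simplifications'' ($\lambda\ge\rho$, $\mu\le\rho$, $\mu\|\bar x - x_{\rm c}\|\le \mu D=\varepsilon/2$) that the paper leaves implicit, and these are all accurate.
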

\begin{proof}
Lemma~\ref{lem:reg_more2} guarantees the bound
$$\left\|\nabla\varphi_{1/(\lambda+\mu)}\left(\tfrac{\mu}{\mu+\lambda} x_{\rm c}+\tfrac{\lambda}{\mu+\lambda}\bar x\right)\right\|\leq \tfrac{\lambda+\mu}{\lambda}\|\nabla \widehat{\varphi}_{1/\lambda}(\bar x)\|+\mu D.$$
Applying Theorem~\ref{thm:conv_strong_conv} with $\lambda$ replaced by $\tfrac{1}{2}\lambda$ and $L$ replaced by $2(L^2 + D^2\mu^2)$, we obtain
$$
\EE\left\|\nabla\varphi_{1/(2\rho)}(\bar z)\right\|\leq \tfrac{\lambda+\mu}{\lambda}\tfrac{\left(14\sqrt{2}\cdot\log_2\left(1+\tfrac{\lambda}{4\mu}\right)\right)\cdot\sqrt{2(L^2+ D^2\mu^2) + \tfrac{1}{4}D^2\lambda^2}}{\sqrt{T+1}}+ \frac{\varepsilon}{2}.
$$ 
Some elementary simplifications yield the result. 
\end{proof}

\let\oldbibliography\thebibliography
\renewcommand{\thebibliography}[1]{%
  \oldbibliography{#1}%
  \setlength{\itemsep}{-1pt}%
}

			\bibliographystyle{plain}
	\bibliography{bibliography}

\end{document}